\newcommand{\E}{\mathbb{E}}
    \newcommand{\Prb}{\mathbb{P}}
		\newcommand{\sS}{\mathbb{S}}
	\DeclareMathOperator{\Diam}{Diam}
				\DeclareMathOperator{\e}{e}
    \newcommand{\sZ}{\mathbb{Z}}
    \newcommand{\sC}{\mathcal{C}}
		\newcommand{\cE}{\mathcal{E}}
			\newcommand{\cG}{\mathcal{G}}
	\newcommand{\cB}{\mathcal{B}}
		\newcommand{\cI}{\mathcal{I}}
		\newcommand{\cH}{\mathcal{H}}
				\newcommand{\cN}{\mathcal{N}}
						\newcommand{\cL}{\mathcal{L}}
	\newcommand{\sR}{\mathbb{R}}
	\newcommand{\sN}{\mathbb{N}}
	\newcommand{\vv}{\overrightarrow{v}}
				\DeclareMathOperator{\cyl}{cyl}
    \newcommand{\ind}{\mathds{1}}
 \theoremstyle{plain}  
\newtheorem{thm}{Theorem}
\newtheorem{prop}{Proposition}
\theoremstyle{plain}
\newtheorem{lem}{Lemma}
\newlength{\separationtitre}
\theoremstyle{remark}
\newtheorem{rk}{Remark}[section]
\date{}
\title{Anchored isoperimetric profile of the infinite cluster in supercritical bond percolation is Lipschitz continuous\thanks{Research was partially supported by the ANR project PPPP (ANR-16-CE40-0016)}} 
\author{Barbara Dembin\thanks{LPSM UMR 8001, Universit{\'e} Paris Diderot, Sorbonne Paris Cit{\'e}, CNRS, F-75013 Paris, France, \textsc{email:} bdembin@lpsm.paris} }
\begin{document}

 \selectlanguage{english}
\maketitle
\textsc{Abstract:} We consider an i.i.d. supercritical bond percolation on $\sZ^d$, every edge is open with a probability $p>p_c(d)$, where $p_c(d)$ denotes the critical parameter for this percolation. We know that there exists almost surely a unique infinite open cluster $\sC_p$ \cite{Grimmett99}. We are interested in the regularity properties in $p$ of the anchored isoperimetric profile of the infinite cluster $\sC_p$.  For $d\geq 2$, we prove that the anchored isoperimetric profile defined in \cite{DembinCheeger} is Lipschitz continuous on all intervals $[p_0,p_1]\subset (p_c(d),1)$.\\

\textsc{Keywords:} Regularity, percolation, isoperimetric constant

\textsc{Ams msc 2010:} primary 60K35, secondary 82B43 

\section{Introduction}

The study of isoperimetric problems in the discrete setting is more recent than in the continuous setting. In the continuous setting, we study the perimeter to volume ratio; in the context of graphs, the analogous problem is the study of the size of edge boundary to volume ratio. This can be encoded by the Cheeger constant. For a finite graph $\cG=(V(\cG),E(\cG))$, we define the edge boundary $\partial_\cG A$  of a subset $A$ of $V(\cG)$ as  $$\partial_\cG A =\Big\{\,e=\langle x,y\rangle \in E(\cG):x\in A,y\notin A \,\Big\}\,. $$ We denote by $|B|$ the cardinal of the finite set $B$. The isoperimetric constant of $\cG$, also called Cheeger constant, is defined as
$$\varphi_\cG=\min\left\{\,\frac{|\partial_\cG A|}{|A|}\,: \, A\subset V(\cG), 0<|A|\leq \frac{|V(\cG)|}{2}\,\right\}\,.$$
This constant was introduced by Cheeger in his thesis \cite{thesis:cheeger}  in order to obtain a lower bound for the smallest eigenvalue of the Laplacian. The isoperimetric constant of a graph gives information on its geometry.

Let $d\geq 2$. We consider an i.i.d. supercritical bond percolation on the graph $(\sZ^d,\E^d )$ having for vertices $\sZ^d$ and for edges $\E^d$ the set of pair of nearest neighbors in $\sZ^d$ for the Euclidean norm. Every edge $e\in\E^d$ is open with a probability $p>p_c(d)$, where $p_c(d)$ denotes the critical parameter for this percolation. We know that there exists almost surely a unique infinite open cluster $\sC_p$ \cite{Grimmett99}. In this paper, we want to study how the geometry of $\sC_p$ varies with $p$ through its Cheeger constant. However, if we minimize the isoperimetric ratio over all possible subgraphs of $\sC_p$ without any constraint on the size, one can prove that $\varphi_{\sC_p}=0$ almost surely. For that reason, we shall minimize the isoperimetric ratio over all possible subgraphs of $\sC_p$ given a constraint on the size. There are several ways to do it. We can for instance study the Cheeger constant of the graph $\sC_n=\sC_p \cap [-n,n]^d$ or of the largest connected component \smash{$\widetilde{\sC}_n$} of $\sC_n$ for $n\geq 1$. As we have $\varphi_{\sC_p}=0$ almost surely, the isoperimetric constants $\varphi_{\sC_n}$ and $\varphi_{\widetilde{\sC}_n}$ go to $0$ when $n$ goes to infinity. Roughly speaking, by analogy with the full lattice, we expect that subgraphs of \smash{$\widetilde{\sC}_n$} that minimize the isoperimetic ratio have an edge boundary size of order $n^{d-1}$ and a size of order $n^d$. 

In \cite{biskup2012isoperimetry}, Biskup, Louidor, Procaccia and Rosenthal defined a modified Cheeger constant $\widetilde {\varphi}_{\sC_n}$ and proved that $n\widetilde {\varphi}_{\sC_n}$ converges towards a deterministic constant in dimension $2$. In \cite{Gold2016}, Gold proved the same result in dimension $d\geq3$. Instead of considering the open edge boundary of subgraphs within $\sC_n$, they considered the open edge boundary within the whole infinite cluster $\sC_p$, this is more natural because $\sC_n$ has been artificially created by restricting $\sC_p$ to the box $[-n,n]^d$. They also added a stronger constraint on the size of subgraphs of $\sC_n$ to ensure that minimizers do not touch the boundary of the box $[-n,n]^d$.  Moreover, they proved that the subgraphs achieving the minimum, properly rescaled, converge towards a deterministic shape that is the Wulff crystal. Namely, it is the shape solving the continuous anisotropic isoperimetric problem associated with a norm $\beta_p$ corresponding to the surface tension in the percolation setting. The quantity $n\widetilde {\varphi}_{\sC_n}$ converges towards the solution of a continuous isoperimetric problem. 

This modified Cheeger constant was inspired by the anchored isoperimetric profile $\varphi_n(p)$. This is another way to define the Cheeger constant of $\sC_p$, that is more natural in the sense that we do not restrict minimizers to remain in the box $[-n,n]^d$. It is defined as follows:
$$ \varphi_n(p)=\min\left\{\,\frac{|\partial_{\sC_p} H|}{|H|}: 0\in H\subset \sC_p,\, \text{ H connected, } 0<|H|\leq n^d\,\right\}\,, $$
 where we condition on the event $\{0\in\sC_p\}$.
We say that $H$ is a valid subgraph if $0\in H\subset \sC_p$, $H$ is connected and $|H|\leq n^d$. We also define the open edge boundary of $H$ as:
$$\partial ^o H=\Big\{\,e\in\partial H,\text{ $e$ is open}\,\Big\}\,$$
where $\partial H$ is the edge boundary of $H$ in $(\sZ^d,\E^d)$.
Note that if $H\subset \sC_p$, then 
$\partial_{\sC_p} H=\partial^o H$. 

We need to introduce some definitions to be able to define properly a limit shape in dimension $d\geq 2$. In order to build a continuous limit shape, we shall define a continuous analogue of the cardinal of the open edge boundary. In fact, we will see that the cardinal of the open edge boundary may be interpreted in term of a surface tension $\cI$, in the following sense. Given a norm $\tau$ on $\sR^d$ and a subset $E$ of $\sR^d$ having a regular boundary, we define $\cI_\tau(E)$ as 
$$\cI_\tau (E)=\int_{\partial E}\tau(n_E(x))\cH^{d-1}(dx)\,,$$
where $\cH ^{d-1}$ denotes the Hausdorff measure in dimension $d-1$ and $n_E(x)$ is the normal unit exterior vector of $E$ at $x$.
The quantity $\cI_\tau(E)$ represents the surface tension of $E$ for the norm $\tau$. At the point $x$,  the tension has intensity $\tau(n_E(x))$ in the direction of $n_E(x)$. We denote by $\cL ^d$ the $d$-dimensional Lebesgue measure. We can associate with the norm $\tau$ the following isoperimetric problem:
$$\text{minimize $\frac{\cI_\tau(E)}{\cL^d(E)}$ subject to $\cL^d(E)\leq 1$}\,.$$
We use the Wulff construction to build a minimizer for this anisotropic isoperimetric problem (see \cite{wulff_cluster}). We define the set $\widehat{W}_\tau$ as
$$\widehat{W}_\tau=\bigcap_{v\in\sS^{d-1}}\left\{x\in\sR^d:\, x\cdot v\leq \tau(v)\right\}\,,$$
where $\cdot$ denotes the standard scalar product and $\sS^{d-1}$ is the unit sphere of $\sR^d$. Taylor proved in \cite{taylor1975} that the set $\widehat{W}_\tau$ properly rescaled is the unique minimizer, up to translations and modifications on a null set, of the associated isoperimetric problem. We need to build an appropriate norm $\beta_p$ for our problem that will be directly related to the cardinal of the open edge boundary. 
 
In \cite{DembinCheeger}, Dembin proves the existence of the limit of $n\varphi_n(p)$ and that it converges towards the solution of the continuous isoperimetric problem associated with the norm $\beta_p$. 
\begin{thm}\label{thmheart}
Let $d\geq 2$, $p>p_c(d)$ and let $\beta_p$ be the norm that will be properly defined in section \ref{s2}. Let $W_p$ be a dilate of the Wulff crystal $\widehat{W}_{\beta _p}$ for the norm $\beta_p$ such that $\cL^d(W_p)=1/{\theta_p}$ where $\theta_p=\Prb(0\in\sC_p)$. Then, conditionally on the event $\{0\in\sC_p\}$,
$$\lim_{n\rightarrow \infty} n\varphi_n(p)=\frac{\cI_p(W_p)}{\theta_p\cL^d(W_p)}=\cI_p(W_p)\text{ a.s..}$$
\end{thm}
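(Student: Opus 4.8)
The plan is to prove Theorem~\ref{thmheart} by establishing the two matching asymptotic inequalities
\[
\limsup_{n\to\infty}n\varphi_n(p)\le\cI_p(W_p)\qquad\text{and}\qquad\liminf_{n\to\infty}n\varphi_n(p)\ge\cI_p(W_p)\quad\text{a.s.},
\]
where I write $\cI_p$ for $\cI_{\beta_p}$. The first inequality will come from an explicit Wulff-type construction, the second from a coarse-graining/compactness argument combined with the continuous anisotropic isoperimetric inequality. Two facts about $\sC_p$ are used throughout: the density convergence $|\sC_p\cap nA|/n^d\to\theta_p\cL^d(A)$ a.s.\ for bounded sets $A$ with negligible boundary, and the standard renormalization picture in which, outside a sparse set of "bad" mesoscopic blocks, $\sC_p$ fills a fraction close to $\theta_p$ of each block and is well connected across neighbouring blocks.

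For the upper bound, fix $\delta>0$ and pick a polytope $W'\subset W_p$ (so $\cL^d(W')\le 1/\theta_p$) with $\cI_p(W')\le\cI_p(W_p)+\delta$. Consider $H_n:=\sC_p\cap(nW')$: by the density convergence $|H_n|\le n^d$ for $n$ large, and one can make $H_n$ connected in $\sC_p$ by adjoining $o(n^d)$ vertices via the renormalization picture, so $H_n$ is a valid subgraph. The key probabilistic input is the concentration estimate defining $\beta_p$ (in Section~\ref{s2}): for a $(d-1)$-dimensional flat piece $F$ of area $a$ and normal $v$, the part of $\partial^o H_n$ crossing $nF$ has size, after division by $n^{d-1}$, concentrated around $a\,\beta_p(v)$. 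Tiling $\partial(nW')$ by such flat pieces (the faces of the polytope) and summing, $|\partial^o H_n|\le n^{d-1}(\cI_p(W')+\delta)$ with high probability. Hence $n\varphi_n(p)\le n|\partial^o H_n|/|H_n|\le\cI_p(W_p)+O(\delta)+o(1)$ a.s., and letting $\delta\to0$ along a sequence gives the bound.

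For the lower bound, suppose towards a contradiction that along some random subsequence there are valid subgraphs $H_n$ with $n|\partial^o H_n|/|H_n|\le\cI_p(W_p)-\eta$ for a fixed $\eta>0$; an elementary isoperimetric bound on $H_n$ forces $|H_n|\asymp n^d$. The first step is to control the geometry of $H_n$: since $H_n$ is only assumed connected, a priori $\frac1n H_n$ may carry long thin filaments and need not be bounded, but a linear-in-length lower bound on the open boundary carried by a filament in $\sC_p$ (again from renormalization) shows that, after discarding a subset of $H_n$ of vanishing volume, one may assume $\frac1n H_n\subset Q$ for a fixed large cube $Q$. Next, fix a mesoscopic scale $K$, call a $K$-block $B\subset nQ$ \emph{full} if it is good (in the renormalization sense) and $|H_n\cap B|\ge(1-\varepsilon)\theta_p K^d$, and let $\widehat H_n$ be the union of the full blocks, rescaled by $n$. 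Because a near-minimizer cannot afford to be only partially full in many blocks (the holes would create too much open boundary), one gets $\theta_p\cL^d(\widehat H_n)\ge n^{-d}|H_n|-o_K(1)$ and, from a \emph{uniform} surface-energy estimate over all such block-sets, $|\partial^o H_n|\ge n^{d-1}(\cI_p(\widehat H_n)-o_K(1))$. By $BV$ compactness in $L^1(Q)$ extract a subsequential limit $E$ of $\widehat H_n$, of finite perimeter, with $\theta_p\cL^d(E)\le1$ and, by lower semicontinuity, $\liminf_n n^{1-d}|\partial^o H_n|\ge\cI_p(E)-o_K(1)$. Therefore $\liminf_n n\varphi_n(p)\ge\cI_p(E)/(\theta_p\cL^d(E))-o_K(1)$; by the scaling $\cI_p(\lambda E)=\lambda^{d-1}\cI_p(E)$ this ratio is non-increasing in $\cL^d(E)$, so it is minimized at volume $1/\theta_p$, where Taylor's theorem \cite{taylor1975} identifies $W_p$ as the unique minimizer, giving $\liminf_n n\varphi_n(p)\ge\cI_p(W_p)/(\theta_p\cL^d(W_p))-o_K(1)=\cI_p(W_p)-o_K(1)$. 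Letting $K\to\infty$ contradicts the assumption, proving the lower bound and hence the theorem.

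The main obstacle is the lower bound, and inside it the two renormalization-based inputs: the a priori control that rules out thin spread-out near-minimizers, and --- above all --- the uniform surface-energy lower bound, which must hold with high probability simultaneously for the exponentially many candidate block-sets in $Q$ and which is what forces a near-minimizer to be essentially full on good blocks. Establishing this uniform estimate (a Peierls-type count of block-surfaces of a given size, combined with a large-deviation bound on the open edge boundary crossing each surface) is where essentially all of the technical work lies; the continuous isoperimetric inequality and the upper-bound construction are comparatively routine once $\beta_p$ and its concentration properties are available.
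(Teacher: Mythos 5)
This theorem is not proved in the paper you were given: it is imported verbatim from the author's companion paper \cite{DembinCheeger}, where the full argument occupies a substantial portion of the manuscript. The present paper only \emph{uses} Theorem~\ref{thmheart} (in Section~\ref{s4}, to convert the Lipschitz estimate on $p\mapsto\cI_p(W_p)$ into the Lipschitz estimate on $p\mapsto\lim_n n\varphi_n(p)$), so there is no in-paper proof against which to compare your sketch.

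That said, your outline does reflect the standard architecture of Wulff-type shape theorems in supercritical percolation and, as far as one can judge at this level of detail, it is consistent with the strategy used in \cite{DembinCheeger} and in Gold's earlier work \cite{Gold2016}: an explicit construction $H_n=\sC_p\cap nW'$ together with the cylinder concentration that defines $\beta_p$ for the upper bound, and a renormalization, surface-energy lower bound uniform over block configurations, $BV$ compactness, lower semicontinuity, and Taylor's uniqueness theorem for the lower bound. You correctly identify the genuinely hard inputs: the a priori reduction to near-minimizers contained in a fixed cube, and the uniform (over exponentially many block interfaces) lower large-deviation estimate for the open boundary crossed by an interface. Two caveats worth noting. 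First, the statement claims $|H_n|\asymp n^d$ via "an elementary isoperimetric bound"; this step actually relies on a nontrivial anchored isoperimetric inequality for $\sC_p$ (of the form $|\partial^o H|\ge c|H|^{(d-1)/d}$ for connected $H\ni 0$ with $|H|$ not too large), which must be established or cited separately. Second, making "$H_n$ connected by adjoining $o(n^d)$ vertices" is not free: one must actually show the added vertices do not inflate $|\partial^o H_n|$ by more than $o(n^{d-1})$, which again requires the renormalization structure. Neither issue is a conceptual error, but both are load-bearing technical points that a complete proof must address, and they are treated at length in the cited references rather than here.
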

\begin{rk} Actually, the same result holds when we condition on the event $\{0\in\sC_{p_0}\}$ for any $p_0\in(p_c(d), p]$.
\end{rk}
\noindent In this paper, we aim to study the regularity properties of the anchored isoperimetric profile. This was first studied by Garet, Marchand, Procaccia, Th{\'e}ret in \cite{GaretMarchandProcacciaTheret}, they proved that the modified Cheeger constant in dimension $2$ is continuous on $(p_c(2),1]$. The aim of this paper is the proof of the two following theorems. The first theorem asserts that the anchored isoperimetric profile is Lipschitz continuous on every compact interval $[p_0,p_1]\subset(p_c(d),1)$.
\begin{thm}[Regularity of the anchored isoperimetric profile] \label{Cheethmd}
Let $d\geq 2$. Let $ p_c (d) < p_0 < p_1 < 1$. There exits a positive constant $\nu$ depending only on $d$, $p_0$ and $p_1$, such that for all $p, q \in [p_0 , p_1 ]$, conditionally on the event $\{0\in\sC_{p_0}\}$,
$$\lim_{n\rightarrow\infty}n|\varphi_n(q)-\varphi_n(p)|\leq \nu|q-p|\,.$$
\end{thm}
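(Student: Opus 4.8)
The plan is to pass through the deterministic limits supplied by Theorem~\ref{thmheart}: conditionally on $\{0\in\sC_{p_0}\}$ one has $n\varphi_n(p)\to\cI_p(W_p)$ and $n\varphi_n(q)\to\cI_q(W_q)$ almost surely, so $\lim_n n|\varphi_n(q)-\varphi_n(p)|=|\cI_q(W_q)-\cI_p(W_p)|$ and it is enough to prove $|\cI_q(W_q)-\cI_p(W_p)|\le\nu|q-p|$ with $\nu=\nu(d,p_0,p_1)$. I would set up the standard monotone coupling $(U_e)_e$ i.i.d.\ uniform on $[0,1]$, $\omega_r(e)=\ind_{U_e\le r}$, so that $\sC_p\subseteq\sC_q$ for $p_0\le p\le q\le p_1$, assume $p<q$, and bound the two signed differences separately; in each case I test the isoperimetric ratio at one parameter against a discretised Wulff shape attached to the \emph{other} parameter. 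Here I rely on the upper-bound construction of \cite{DembinCheeger}: for a fixed bounded convex body $K$ with $0$ in its interior and $r\in[p_0,p_1]$, the cluster $G_r(K,n)$ of $0$ in $\sC_r\cap nK$ satisfies, a.s., $|G_r(K,n)|=\theta_r\,\cL^d(K)\,n^d(1+o(1))$ and $|\partial^o_r G_r(K,n)|=\cI_{\beta_r}(K)\,n^{d-1}(1+o(1))$, where $\partial^o_r$ denotes the set of $r$-open edges of the edge boundary; I also use that $p\mapsto\theta_p$ is Lipschitz on $[p_0,p_1]$ and bounded below there by $\theta_{p_0}>0$.

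For $\cI_p(W_p)\le\cI_q(W_q)+\nu_1(q-p)$ I would take $G=G_q(W_q,n)$ and $H''=$ the cluster of $0$ in $\sC_p\cap nW_q$, which is a valid subgraph for $\varphi_n(p)$ (it contains $0$, and $|H''|\le|\sC_p\cap nW_q|=(\theta_p/\theta_q)n^d(1+o(1))\le n^d$ eventually since $\theta_p<\theta_q$). The point is the elementary inclusion $\partial^o_p H''\subseteq\partial^o_q G$: a $p$-open edge cannot join $\sC_p$ to $\sC_p^c$, so an edge of $\partial^o_p H''$ has its outer endpoint outside $\sC_p$, hence outside $nW_q$ or in another $p$-cluster, hence outside $G$. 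Thus $|\partial^o_p H''|\le|\partial^o_q G|=\cI_q(W_q)n^{d-1}(1+o(1))$, while $|H''|=(\theta_p/\theta_q)n^d(1+o(1))$ by the same connectivity statement of \cite{DembinCheeger} applied to $\sC_p$ inside the convex box $nW_q$; dividing and letting $n\to\infty$ gives $\cI_p(W_p)\le(\theta_q/\theta_p)\cI_q(W_q)$, and $\theta_q/\theta_p\le1+\theta_{p_0}^{-1}(\theta_q-\theta_p)\le1+C(q-p)$ together with the uniform bound $\cI_q(W_q)\le M$ (below) finishes this direction.

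For the reverse bound $\cI_q(W_q)\le\cI_p(W_p)+\nu_2(q-p)$ one cannot use a subgraph of $\sC_p$ directly, and using the $q$-Wulff shape at parameter $q$ would reintroduce $\beta_q$; instead I would fix a small $\ep>0$, set $\widetilde W=(1-\ep)(\theta_p/\theta_q)^{1/d}W_p$, and take $H^+=G_q(\widetilde W,n)$, so that $|H^+|=(1-\ep)^d\theta_p\cL^d(W_p)n^d(1+o(1))=(1-\ep)^d n^d(1+o(1))\le n^d$ eventually, a valid subgraph for $\varphi_n(q)$. With $H_0=G_p(\widetilde W,n)$ one has $|\partial^o_p H_0|=(1-\ep)^{d-1}(\theta_p/\theta_q)^{(d-1)/d}\cI_p(W_p)n^{d-1}(1+o(1))$, and the key estimate is
\[
|\partial^o_q H^+|\ \le\ |\partial^o_p H_0|+\nu_2(q-p)\,n^{d-1}+o(n^{d-1}).
\]
Indeed, every edge of $\partial^o_q H^+$ crosses $\partial(n\widetilde W)$ (its outer endpoint cannot lie in $\sC_q$, so it lies outside $n\widetilde W$, up to lower-order detached-cluster contributions), and among the $O(n^{d-1})$ such crossing edges one fails to lie in $\partial^o_p H_0$ only if it is $q$-open but $p$-closed (probability at most $q-p$ each) or if its inner endpoint lies in $\sC_q\setminus\sC_p$ (probability $\theta_q-\theta_p=O(q-p)$ for a vertex next to $\partial(n\widetilde W)$), which by a binomial tail bound accounts for only $O((q-p)n^{d-1})$ edges. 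Dividing by $|H^+|\sim n^d$ and letting $n\to\infty$ gives $\cI_q(W_q)\le(1-\ep)^{d-1}(\theta_p/\theta_q)^{(d-1)/d}\cI_p(W_p)+\nu_2(q-p)\le\cI_p(W_p)+\nu_2(q-p)$.

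Combining the two bounds yields $|\cI_q(W_q)-\cI_p(W_p)|\le\nu|q-p|$ with $\nu=\max(\nu_1,\nu_2)$, once one notes the uniform estimate $\cI_r(W_r)\le M:=\big(\max_{v\in\sS^{d-1}}\beta_1(v)\big)\sup_{r\in[p_0,p_1]}\cH^{d-1}(\partial W_r)<\infty$, which holds because $\beta_r\le\beta_1$ and $W_r$ is convex with volume $1/\theta_r\le1/\theta_{p_0}$, hence of bounded surface area. I expect the only genuinely delicate step to be confirming that the volume and open-boundary asymptotics of \cite{DembinCheeger} hold, with error uniform in $n$, for arbitrary bounded convex bodies (dilated and translated Wulff shapes) and at every parameter of $[p_0,p_1]$ — i.e.\ that the construction there is not tied to the extremal shape $W_r$ — together with controlling the lower-order detached-cluster terms uniformly; granted that, the rest reduces to the classical Lipschitz continuity of $p\mapsto\theta_p$, elementary bookkeeping for the cross-parameter competitors, and routine concentration, so no new probabilistic input beyond \cite{DembinCheeger} seems necessary.
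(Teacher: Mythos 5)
Your reduction to $|\cI_q(W_q)-\cI_p(W_p)|\le\nu|q-p|$ via Theorem~\ref{thmheart} is exactly the paper's first step, but from there you diverge sharply: the paper never touches discrete competitors again and works entirely in the continuous world, combining the Lipschitz continuity of $p\mapsto\beta_p$ (Theorem~\ref{heartflow}) with the variational characterisation of $W_p$ and the smoothness of $p\mapsto\theta_p$. You instead try to compare the two Cheeger constants directly at finite $n$ by inserting cross-parameter test subgraphs into the coupled percolations.

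The decisive gap is the asymptotic you attribute to \cite{DembinCheeger}, namely $|\partial^o_r G_r(K,n)|=\cI_{\beta_r}(K)\,n^{d-1}(1+o(1))$ for $G_r(K,n)$ the open cluster of $0$ in $\sC_r\cap nK$. This is false for $r<1$. Every open edge of $\partial^o_r G_r(K,n)$ crosses $\partial(nK)$ (as you correctly observe), and the number of such $r$-open edges with endpoints in $\sC_r$ is, per unit of $(d-1)$-dimensional boundary area, a constant $\gamma_r$ that is \emph{strictly larger} than $\beta_r$ in any given direction: $\beta_r$ is by Proposition~\ref{normbeta} the normalised size of the \emph{minimal} $r$-open cutset, which for $r<1$ gains by meandering to avoid locally dense regions, whereas the boundary of the naive cluster is pinned to the flat surface $\partial(nK)$ and pays the full crossing density. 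Consequently $|\partial^o_r G_r(K,n)|$ is of order $\gamma_r\cH^{d-1}(\partial K)n^{d-1}$, not $\cI_{\beta_r}(K)n^{d-1}$, and the chain $n\varphi_n(p)\le n|\partial^o_p H''|/|H''|\le n|\partial^o_q G|/|H''|$ yields an upper bound in terms of $\gamma_q\cH^{d-1}(\partial W_q)$, which is not close to $\cI_q(W_q)$ and does not imply $\cI_p(W_p)\le\cI_q(W_q)+O(q-p)$. The test subgraphs that actually realise the surface energy in \cite{DembinCheeger} (and in \cite{Gold2016}) are built by approximating $W$ by a polytope and gluing together near-minimal cutsets along each face, not by taking the cluster of the origin in a dilated copy of $W$; and with those glued-cutset shapes your inclusion $\partial^o_p H''\subseteq\partial^o_q G$ and the binomial-tail comparison in your second direction no longer hold as stated, since such shapes are not obtained as clusters of $\sC_r$ in a box and are not monotone under the coupling. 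Your second direction has the further issue that ``inner endpoint not in $H_0$'' is not the same as ``inner endpoint not in $\sC_p$'': a vertex can lie in $\sC_p\cap n\widetilde W$ yet fail to be in $G_p(\widetilde W,n)$ because its $p$-open connection to the origin exits $n\widetilde W$, and controlling the boundary density of such vertices is not a simple $\theta_q-\theta_p$ count.

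The moral is that one cannot bypass the comparison of minimal cutsets at parameters $p$ and $q$: that comparison is precisely the content of Theorem~\ref{heartflow}, proved via Zhang's uniform control on the size $\cN_{n,p}$ of minimal cutsets (Theorem~\ref{Zhang}) and the two-stage Bernoulli coupling in the paper, and the deduction of Theorem~\ref{Cheethmd} from it is a short calculation with the inequalities $|\cI_p(E)-\cI_q(E)|\le\kappa|q-p|\cH^{d-1}(\partial E)$, the minimality of $W_p$, the Lipschitz property of $\theta_p$, and the uniform bound $\cH^{d-1}(\partial W_p)\le\cH^{d-1}(\partial W_{p_0})\beta^{max}/\beta^{min}$. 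Your argument, as written, attempts to avoid Theorem~\ref{heartflow} and is blocked by the incorrect identification of the naive cluster's open boundary with the anisotropic surface energy.
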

\begin{rk} Actually, the Cheeger constant is also continuous at $1$, this is not a consequence of Theorem \ref{Cheethmd} but it comes from the fact that the map $p\rightarrow\beta_p$ is continuous on $(p_c(d),1]$. This result is a corollary of Theorem 4 in \cite{flowconstant}.
\end{rk}
\begin{rk}We did not manage to obtain here that the anchored isoperimetric profile is Lipschitz continuous on $[p_0,1]$ for a technical reason that is due to a coupling we use in the proof of Theorem \ref{Cheethmd}. However, this restriction is likely irrelevant. 
\end{rk}
\noindent The second theorem studies the Hausdorff distance between two Wulff crystals associated with norms $\beta_p$ and $\beta_q$. 
\begin{thm}[Regularity of the anchored isoperimetric profile]\label{Wulffthmd}
Let $d\geq 3$. Let $ p_c (d) < p_0 < p_1 < 1$. There exits a positive constant $\nu'$ depending only on $d$, $p_0$ and $p_1$, such that for all $p, q \in [p_0 , p_1 ]$,
$$d_\cH(\widehat{W}_{\beta _p},\widehat{W}_{\beta _q})\leq \nu'|q-p|\,,$$
where $d_\cH$  is the Hausdorff distance between non empty compact sets of $\sR^d$.
\end{thm}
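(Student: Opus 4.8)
The plan is to deduce the Hausdorff-distance bound from a uniform, quantitative comparison between the norms $\beta_p$ and $\beta_q$, together with the fact that the Wulff construction depends continuously (indeed Lipschitz-ly) on the underlying norm as long as the norm stays uniformly bounded above and below. Concretely, I would first establish an estimate of the form
\begin{equation*}
\sup_{v\in\sS^{d-1}}\bigl|\beta_p(v)-\beta_q(v)\bigr|\leq c\,|p-q|
\qquad\text{for all }p,q\in[p_0,p_1],
\end{equation*}
with $c=c(d,p_0,p_1)$. This is the analytic heart of the argument and is presumably already available (or proved en route to Theorem \ref{Cheethmd}) from the regularity of the flow constant / surface tension in $p$: since $\beta_p$ is, up to the normalisation by $\theta_p$, a directional flow constant, one couples the percolation configurations at parameters $p$ and $q$ by the standard monotone coupling with i.i.d. uniform marks, bounds the number of edges whose state differs by a Chernoff estimate, and controls how many such discrepancy edges can lie on a near-minimal cutset; the map $p\mapsto\theta_p$ is also Lipschitz on $[p_0,p_1]$. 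I would cite Theorem 4 of \cite{flowconstant} and the section \ref{s2} construction of $\beta_p$ here.

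Next I would record the a priori two-sided bound $0<c_-\leq\beta_p(v)\leq c_+<\infty$ uniformly for $v\in\sS^{d-1}$ and $p\in[p_0,p_1]$, which holds because the flow constant is bounded away from $0$ and $\infty$ on any compact subinterval of $(p_c(d),1)$. From the uniform norm estimate it then follows that the unit balls satisfy
\begin{equation*}
(1-c'|p-q|)\,B_{\beta_q}\subseteq B_{\beta_p}\subseteq(1+c'|p-q|)\,B_{\beta_q},
\end{equation*}
with $c'=c/c_-$, hence $d_\cH(B_{\beta_p},B_{\beta_q})\leq c''|p-q|$. The step from balls of the norms to the Wulff crystals $\widehat W_{\beta_p}=\bigcap_{v}\{x:x\cdot v\leq\beta_p(v)\}$ is then a duality/polarity argument: the Wulff crystal of a norm is the polar dual of the norm's unit ball, and polarity is Lipschitz on the family of convex bodies sandwiched between two fixed balls centred at the origin (because then all bodies in the family contain a fixed ball and are contained in a fixed ball, and on such a family the Minkowski functional and the support function are comparable, and polarity is bi-Lipschitz for the Hausdorff metric). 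Equivalently and more directly, $\widehat W_{\beta_p}\subseteq(1+c'|p-q|)\widehat W_{\beta_q}$ and symmetrically, since shrinking the half-space offsets $\beta_q(v)$ by a multiplicative factor shrinks the intersection by that factor; combined with the uniform bound $\widehat W_{\beta_q}\subseteq c_-^{-1}\bar B(0,1)$ this yields $d_\cH(\widehat W_{\beta_p},\widehat W_{\beta_q})\leq\nu'|p-q|$ with $\nu'=\nu'(d,p_0,p_1)$, as claimed.

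The main obstacle I anticipate is the first estimate, the Lipschitz continuity of $v\mapsto\beta_p(v)$ in $p$ uniformly over directions: one must make sure the coupling argument genuinely gives a bound linear in $|p-q|$ (not merely continuity) and uniform in the direction $v$, which requires controlling the surface tension contribution of the discrepancy edges on an essentially optimal cutset — this is exactly where the restriction away from $p=1$ and the specific coupling enter, mirroring the second remark after Theorem \ref{Cheethmd}. Once that estimate is in hand, the passage to Hausdorff distance of the Wulff crystals is soft convex geometry and should be only a few lines; I would isolate it as a small lemma: if two norms on $\sR^d$ are within a factor $1\pm\delta$ of each other and both are pinched between $c_-$ and $c_+$ times the Euclidean norm, then their Wulff crystals are within Hausdorff distance $C(c_-,c_+)\,\delta$. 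Note this theorem is stated for $d\geq3$ only, matching the regime in which the limit-shape statement (Theorem \ref{thmheart}) and the companion results of \cite{Gold2016} are available in the form needed.
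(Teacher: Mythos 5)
Your proposal matches the paper's proof in essence: both deduce the Hausdorff bound from the Lipschitz estimate $\sup_{v}|\beta_p(v)-\beta_q(v)|\leq\kappa|q-p|$ (Theorem \ref{heartflow}), the uniform two-sided bounds $\beta^{\min}\leq\beta_p\leq\beta^{\max}$ on $[p_0,p_1]$, and the identification of $\widehat{W}_{\beta_p}$ with the unit ball of the dual norm $\beta_p^*$. The paper phrases the last step concretely in terms of $\beta_p^*$ (showing it is Lipschitz in $p$ and comparing radii $1/\beta_p^*(y)$), whereas you phrase it via multiplicative sandwiching of the bodies, but these are the same soft-convexity argument and yield equivalent constants.
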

The key element to prove these two theorems is to prove the regularity of the map $p\mapsto \beta_p$. We recall that it is already known that the map $p\rightarrow\beta_p$ is continuous on $(p_c(d),1]$. 
\begin{thm}[Regularity of the flow constant] \label{heartflow}
Let $p_c(d)<p_0<p_1<1$. There exists a positive constant $\kappa$ depending only on $d$, $p_0$ and $p_1$, such that for all $p\leq q$ in $[p_0,p_1]$,
$$\sup_{x\in\mathbb{S}^{d-1}}|\beta_p(x)- \beta_q(x)|\leq \kappa |q-p| \,.$$
\end{thm}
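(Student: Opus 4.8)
\emph{Proof strategy.} Recall that $\beta_p$ is the flow constant associated with i.i.d.\ Bernoulli($p$) edge capacities: for $v\in\sS^{d-1}$, $\beta_p(v)$ is the almost sure limit, as $n\to\infty$, of $\phi_p(v,n)/s_n$, where $\phi_p(v,n)$ is the maximal flow from the top to the bottom of a cylinder of width $n$ directed along $v$ when each open edge has capacity $1$ and each closed edge has capacity $0$, and $s_n=\Theta(n^{d-1})$ is the cross-sectional area; by the max-flow min-cut theorem, $\phi_p(v,n)$ is also the minimal number of open edges contained in a cutset separating the top from the bottom of this cylinder. We couple all the percolations by attaching to each edge $e$ an independent uniform variable $U_e$ on $[0,1]$ and calling $e$ \emph{$p$-open} when $U_e\leq p$, so that configurations increase with $p$. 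In particular, for $p\leq q$ the $q$-capacities dominate the $p$-capacities edge by edge, whence $\phi_p(v,n)\leq\phi_q(v,n)$ and $\beta_p(v)\leq\beta_q(v)$; it remains to bound $\beta_q(v)-\beta_p(v)$ from above by $\kappa(q-p)$, uniformly in $v\in\sS^{d-1}$.

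\emph{Comparing the two flows through one cutset.} Fix $v$ and $p\leq q$ in $[p_0,p_1]$. Among the cutsets of the cylinder that contain exactly $\phi_p(v,n)$ open edges of the $p$-configuration, let $\mathcal{E}_n$ be one of minimal cardinality, selected by a deterministic tie-breaking rule so that $\mathcal{E}_n$ is a measurable function of the $p$-configuration alone. Since $\mathcal{E}_n$ is again a cutset for the $q$-configuration,
$$\phi_q(v,n)\ \leq\ \big|\{e\in\mathcal{E}_n:\,U_e\leq q\}\big|\ =\ \phi_p(v,n)+X_n,\qquad X_n:=\big|\{e\in\mathcal{E}_n:\,p<U_e\leq q\}\big|\,.$$
A $p$-open edge of $\mathcal{E}_n$ never contributes to $X_n$; and conditionally on the $p$-configuration (hence on $\mathcal{E}_n$), every $p$-closed edge $e$ of $\mathcal{E}_n$ satisfies $p<U_e\leq q$ independently with probability $(q-p)/(1-p)\leq(q-p)/(1-p_1)$. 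Hence, conditionally on the $p$-configuration, $X_n$ is stochastically dominated by a binomial variable with $|\mathcal{E}_n|$ trials and success probability $(q-p)/(1-p_1)$; here the assumption $p_1<1$ is used.

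\emph{The size of the optimal cutset and conclusion.} The crucial ingredient, which is where $p_0>p_c(d)$ enters, is an a priori bound on the \emph{cardinality} of $\mathcal{E}_n$: there is $C_1=C_1(d,p_0,p_1)$ such that $\Prb(|\mathcal{E}_n|\geq C_1 s_n)\leq e^{-s_n/C_1}$ uniformly over $p\in[p_0,p_1]$ and $v\in\sS^{d-1}$ --- morally, in the supercritical regime a weight-optimal cutset cannot wander far from the flat cross-section, so it has a number of edges comparable to the cross-sectional area. Granting this, a Chernoff bound applied to the conditional binomial law of $X_n$ yields constants $C_2,c>0$ depending only on $d,p_0,p_1$ such that
$$\Prb\big(X_n\geq C_2(q-p)s_n\big)\ \leq\ e^{-s_n/C_1}+e^{-c(q-p)s_n}\,,$$
and for fixed $p<q$ this is summable in $n$ since $s_n=\Theta(n^{d-1})$. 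By Borel--Cantelli, almost surely $\phi_q(v,n)-\phi_p(v,n)\leq C_2(q-p)s_n$ for all large $n$; dividing by $s_n$ and letting $n\to\infty$ along the almost sure convergences $\phi_p(v,n)/s_n\to\beta_p(v)$ and $\phi_q(v,n)/s_n\to\beta_q(v)$ gives $\beta_q(v)-\beta_p(v)\leq C_2(q-p)$. Since $C_2$ does not depend on $v$, combining with the monotonicity $\beta_p\leq\beta_q$ yields $\sup_{v\in\sS^{d-1}}|\beta_p(v)-\beta_q(v)|\leq C_2|q-p|$, i.e.\ the claim with $\kappa=C_2$.

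\emph{Main obstacle.} Everything above is the monotone coupling, a conditional Chernoff estimate and Borel--Cantelli; the genuinely delicate point is the uniform upper bound on $|\mathcal{E}_n|$, and in particular making it uniform both in $p\in[p_0,p_1]$ and in the direction $v$ (for non-lattice directions one typically reduces to rational directions and controls the extra boundary edges). A fallback for the uniformity in $v$, should the cutset estimate only be available direction by direction with a $v$-dependent constant, is to run the argument for finitely many directions forming a fine net of $\sS^{d-1}$ and to propagate to all $v$ using that the norms $\beta_p$, $p\in[p_0,p_1]$, are equivalent to the Euclidean norm with constants depending only on $d,p_0,p_1$.
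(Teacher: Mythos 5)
Your proof is correct and takes essentially the same approach as the paper: monotone coupling for $\beta_p\leq\beta_q$, then re-examining the $p$-optimal cutset under a coupling where its identity is independent of the extra $q$-openings, a Chernoff/Hoeffding bound on the extra $q$-open edges, and, crucially, a uniform (in $p\in[p_0,p_1]$) bound on the cardinality of the optimal cutset, which you correctly identify as the genuinely delicate point and which the paper establishes as a separate theorem (an adaptation of Zhang's cutset-size estimate). The only cosmetic differences are that the paper works directly with expectations $\E[\tau_p(n,\vv)]$ rather than a.s.\ limits plus Borel--Cantelli, and introduces a slack parameter $\delta$ sent to $0$ at the end instead of a fixed constant multiple of $(q-p)$; also note that the paper's description of the uniform coupling has a typo (``$U(e)\geq p$'' should be ``$U(e)\leq p$''), and your version has it right.
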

The proof of this theorem will strongly rely on an adaptation of the proof of Zhang in \cite{Zhang2017}.
\begin{rk} In this paper, we choose to work on the anchored isoperimetric profile instead of the modified Cheeger constant because the norm we use is the same for all dimensions $d\geq 2$. The existence of the modified Cheeger constant in dimension $2$ uses another norm specific to this dimension (see \cite{biskup2012isoperimetry}). In \cite{Gold2016}, Gold proved the existence of the modified Cheeger constant for $d\geq 3$ with the same norm $\beta_p$. Actually, we believe that his proof also holds in dimension $2$ up to using similar combinatorial arguments as in \cite{DembinCheeger}. Therefore, Theorem \ref{Cheethmd} may be shown for the modified Cheeger constant in dimension $d\geq 2$ using the same ingredients as in this paper.
\end{rk}

Here is the structure of the paper. In section \ref{s2}, we define the norm $\beta_p$. We prove that the map $p\mapsto \beta_p$ is Lipschitz continuous in section  \ref{s3}. Finally, we prove Theorems \ref{Cheethmd} and \ref{Wulffthmd} in section \ref{s4}.

\section[s2]{Definition of the norm $\beta_p$}\label{s2}
We introduce now many notations used for instance in  \cite{Rossignol2010} concerning flows through cylinders.
Let $A$ be a non-degenerate hyperrectangle, that is to say a rectangle of dimension $d-1$ in $\sR^d$. Let $\vv$ be one of the two unit vectors normal to $A$. Let $h>0$, we denote by $\cyl(A,h)$ the cylinder with base $A$ and height $2h$ defined by 
$$\cyl(A,h)=\{x+t\vv\,: \,  x\in A,\, t\in[-h,h]\}\,.$$
The set $\cyl(A,h)\setminus A$ has two connected components, denoted by $C_1(A,h)$ and $C_2(A,h)$. For $i=1,2$, we denote by $C'_i(A,h)$ the discrete boundary of $C_i(A,h)$ defined by 
$$C'_i(A,h)=\left\{x\in\sZ^d\cap C_i(A,h)\,:\,
\exists y \notin \cyl(A,h),\, \langle x,y \rangle\in\E^d  \right\}\,.$$
We say that the set of edges $E$ cuts $C'_1(A,h)$ from $C'_2(A,h)$ in $\cyl(A,h)$ if any path $\gamma$ from $C'_1(A,h)$ to $C'_2(A,h)$ in $\cyl(A,h)$ contains at least one edge of $E$. We call such a set a cutset. For any cutset $E$, let $|E|_{o,p}$ denote the number of $p$-open edges in $E$. We shall call it the $p$-capacity of $E$. Define
$$\tau_p(A,h)=\min\left\{ |E|_{o,p}: \,\text{ $E$ cuts $C'_1(A,h)$ from $C'_2(A,h)$ in $\cyl(A,h)$}\right\}\,.$$
Note that it is a random quantity as $|E|_{o,p}$ is random, and that the cutsets in this definition are anchored at the border of $A$. This quantity is related to the fact that graphs that achieve the infimum in the definition of $\varphi_n(p)$ try to minimize their open edge boundary. We refer to section 3 in \cite{DembinCheeger} for more detailed explanations on the construction of this norm $\beta_p$. To build a norm upon this quantity, we use the fact that the quantity $\tau_p(A,h)$ properly renormalized converges towards a deterministic constant when the size of the cylinder goes to infinity. The following proposition is a corollary of Proposition 3.5 in  \cite{Rossignol2010}.

\begin{prop}[Definition of the norm $\beta_p$]\label{normbeta}
Let $d\geq 2$, $p>p_c(d)$, $A$ be a non-degenerate hyperrectangle and $\vv$ one of the two unit vectors normal to $A$. Let $h$ be a height function such that $\lim_{n\rightarrow \infty }h(n)=\infty$. The limit 
$$\beta_p(\vv)=\lim_{n\rightarrow \infty } \dfrac{\E[\tau_p(nA,h(n))]}{\cH^{d-1}(nA)}$$
exists and is finite. Moreover, the limit is independent of $A$ and $h$ and the homogeneous extension of $\beta_p$ to $\sR^d$ is a norm.
\end{prop}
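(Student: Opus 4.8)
The plan is to recognize Proposition~\ref{normbeta} as a specialization, to i.i.d.\ Bernoulli capacities, of the general subadditive theory of maximal flows through cylinders, and then to supply the one ingredient specific to the supercritical percolation setting, namely strict positivity of $\beta_p$ in every direction. Concretely, to each edge $e\in\E^d$ assign the random capacity $t_p(e)=\ind_{\{e\ \text{is }p\text{-open}\}}$, so that $(t_p(e))_{e\in\E^d}$ are i.i.d.\ Bernoulli$(p)$, in particular bounded. Then $|E|_{o,p}$ is exactly the total capacity of the edge set $E$, and by the max-flow min-cut theorem $\tau_p(A,h)$ is the maximal flow through $\cyl(A,h)$ between its two ends $C_1'(A,h)$ and $C_2'(A,h)$. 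Proposition~3.5 of \cite{Rossignol2010} applies verbatim to bounded i.i.d.\ capacities and yields at once: the limit $\beta_p(\vv)=\lim_{n}\E[\tau_p(nA,h(n))]/\cH^{d-1}(nA)$ exists, is finite, and is independent of the non-degenerate hyperrectangle $A$ normal to $\vv$ and of the height function $h$ with $h(n)\to\infty$ (the point being that a near-minimal cutset can be confined to a slab of bounded width around $nA$, so the rest of the cylinder is irrelevant); convergence also holds a.s.\ and in $L^1$; $\beta_p$ is even, $\beta_p(-\vv)=\beta_p(\vv)$; and the positively homogeneous extension of $\beta_p$ to $\sR^d$ is subadditive, so this extension is a seminorm.

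It then remains to promote ``seminorm'' to ``norm'', i.e.\ to show $\beta_p(\vv)>0$ for every $\vv\in\sS^{d-1}$; this is where $p>p_c(d)$ is used. Fix $\vv$ and $A$. Any edge set cutting the two ends of $\cyl(nA,h(n))$ must, once intersected with a slab of fixed width $2K$ around $nA$, still block every crossing of that slab; a standard renormalization argument for supercritical percolation shows that, with probability tending to $1$, such a blocking set contains at least $c\,n^{d-1}$ $p$-open edges, with $c=c(d,p)>0$. Hence $\E[\tau_p(nA,h(n))]\ge c'\,\cH^{d-1}(nA)$ and $\beta_p(\vv)\ge c'>0$ uniformly in $\vv$. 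Alternatively one may simply cite the known strict positivity of the flow constant $\nu_p$ for all $p>p_c(d)$, which follows from the results in \cite{Zhang2017}. Combined with the previous paragraph, $\beta_p$ is a norm.

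For completeness, let me isolate the two geometric facts hidden in \cite[Prop.~3.5]{Rossignol2010}, and the main obstacle. Finiteness is elementary: the set of all edges of $\E^d$ straddling the affine hyperplane spanned by $nA$, intersected with $\cyl(nA,h(n))$, is a cutset of cardinality at most $C_d\,\cH^{d-1}(nA)$, so $\E[\tau_p(nA,h(n))]\le p\,C_d\,\cH^{d-1}(nA)$. The delicate point --- and the main obstacle in a self-contained proof --- is the subadditivity of $\beta_p$ (equivalently, the triangle inequality for its homogeneous extension): one realizes the direction $\vv$ as the top face of a thin simplicial ``tent'' whose remaining faces have prescribed normals $\vv_1,\dots,\vv_k$, glues near-minimal cutsets for the $\vv_i$ along those faces, and must bound by $o(n^{d-1})$ the number of extra edges needed to seal the seams (the edges and corners of the tent), while also handling the anchoring of cutsets at $\partial(nA)$ and the passage from rational to arbitrary directions by a monotonicity/continuity argument. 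Establishing this sealing estimate is exactly the content of \cite[Prop.~3.5]{Rossignol2010}, which I would invoke rather than reprove.
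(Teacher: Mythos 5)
Your approach is the same as the paper's: both simply invoke Proposition~3.5 of \cite{Rossignol2010}, applied to the i.i.d.\ Bernoulli$(p)$ edge capacities $\ind_{\{e\text{ is }p\text{-open}\}}$, for existence, finiteness, independence of $A$ and $h$, and the norm structure. Your extra care in isolating strict positivity of $\beta_p$ --- and hence the passage from seminorm to norm --- as the place where $p>p_c(d)$ enters is a correct clarification of what is packed into that citation, and the renormalization sketch you give for it is the standard argument.
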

\noindent As the limit does not depend on $A$ and $h$, in the following for simplicity, we will take $h(n)=n$ and $A= S(\vv)$ where $S(\vv)$ is a square isometric to $[-1,1]^{d-1}\times\{0\}$ normal to $\vv$. We will denote by $B(n,\vv)$ the cube $\cyl(nS(\vv),n)$ and by $\tau_p(n,\vv)$ the quantity $\tau_p(nS(\vv),n)$. 

\section[s3]{Regularity of the map $p\mapsto\beta_p$}\label{s3}
Let $p_0>p_c(d)$ and let $q>p\geq p_0$.
Our strategy is the following, we easily get that $\beta_p\leq \beta_q$ by properly coupling the percolations of parameters $p_c(d)<p<q$. The second inequality requires more work.
We denote by $E_{n,p}$ the random cutset of minimal size that achieves the minimum in the definition of $\tau_p(n,\vv)$. By definition, as $E_{n,p}$ is a cutset, we can bound by above $\tau_q(n,\vv)$ by the number of edges in $E_{n,p}$ that are $q$-open, which we expect to be at most $\tau_p(n,\vv)+C(q-p)|E_{n,p}|$ where $C$ is a constant. We next need to get a control of $|E_{n,p}|$ which is uniform in $p\in[p_0,1]$ of the kind $c_dn^{d-1}$ where $c_d$ depends only on $p_0$. In \cite{Zhang2017}, Zhang obtained a control on the size of the smallest minimal cutset corresponding to maximal flows in general first passage percolation, but his control depends on the distribution $G$ of the variables $(t(e))_{e\in\E^d}$ associated with the edges. We only consider probability measures $G_p=p\delta_1+(1-p)\delta_0$ for $p>p_c(d)$, but we need to adapt Zhang's proof in this particular case to obtain a control that does not depend on $p$ anymore.
More precisely, let us denote by $\cN_{n,p}$ the total number of edges in $E_{n,p}$. We have the following control on $\cN_{n,p}$.

\begin{thm}[Adaptation of Theorem 2 in \cite{Zhang2017}]\label{Zhang}
Let $p_0>p_c(d)$. There exist constants $C_1$, $C_2$ and $\alpha$ that depend only on $d$ and $p_0$ such that for all $p\in[p_0,1]$, for all $n\in\sN^*$,
$$\Prb_p\left[\cN_{n,p}>\alpha n^{d-1}\right]\leq C_1\exp(-C_2 n^{d-1})\,.$$
\end{thm}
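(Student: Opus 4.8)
The plan is to adapt Zhang's renormalization argument from \cite{Zhang2017} to the specific family of distributions $G_p = p\delta_1 + (1-p)\delta_0$, tracking carefully that every constant can be chosen uniformly over $p \in [p_0,1]$. The starting point is the observation that $\tau_p(n,\vv) \leq |E|_{o,p}$ for \emph{any} cutset $E$, so the minimal cutset $E_{n,p}$ has $p$-capacity at most that of, say, the ``flat'' cutset consisting of all edges crossing the median hyperplane of $B(n,\vv)$; hence $\tau_p(n,\vv) \leq C_d n^{d-1}$ deterministically (in fact with room to spare once $p < 1$, by a Peierls-type estimate on open edges in a fixed flat cutset, but the deterministic bound already suffices as input). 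The real issue is that a cutset of small \emph{capacity} could a priori contain enormously many \emph{closed} edges, so we must rule out that $E_{n,p}$ — which is chosen to have minimal cardinality among minimal-capacity cutsets — is large.

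First I would set up the renormalization exactly as in \cite{Zhang2017}: tile a slightly enlarged cylinder by boxes of a fixed mesoscopic scale $K$, and declare a $K$-box \emph{good} if the restricted percolation configuration inside it has the usual supercritical connectivity properties (a unique crossing cluster in each direction, every open path of macroscopic length inside the box connected to that crossing cluster, etc.). By the standard supercritical sponge/renormalization estimates — valid for every $p > p_c(d)$, and with failure probability that can be made as small as we like by taking $K$ large, \emph{uniformly for $p \geq p_0$} since $p \mapsto \Prb_p[\text{box is good}]$ is nondecreasing — the field of good boxes stochastically dominates a highly supercritical site percolation on the renormalized lattice. This is where the restriction $p \geq p_0$ (rather than $p > p_c$) enters and gives uniformity: fix $K = K(d,p_0)$ once and for all so that the good-box field dominates a site percolation with parameter close enough to $1$ for the Peierls contour argument below.

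Next comes the geometric heart of the argument, again following Zhang. Suppose $\cN_{n,p} > \alpha n^{d-1}$ for a constant $\alpha$ to be chosen. Because $E_{n,p}$ is a \emph{minimal} cutset of minimal cardinality, it is ``locally taut'': one shows that if a mesoscopic box $Q$ is good and meets $E_{n,p}$ in more than a bounded number of edges, one can reroute the cutset through $Q$ and strictly decrease either its capacity or its cardinality without increasing the other, contradicting minimality — the open crossing clusters inside a good box provide the short-circuits needed for the surgery. Hence each good $K$-box contributes $O_d(K^{d-1})$ edges to $E_{n,p}$, so a large $\cN_{n,p}$ forces at least $c\alpha n^{d-1}/K^{d-1}$ \emph{bad} boxes among those met by $E_{n,p}$. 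Since $E_{n,p}$ separates the two faces of the cylinder, the set of boxes it meets contains a $*$-connected ``surface'' (a cutset in the renormalized lattice) of size $\gtrsim (n/K)^{d-1}$; intersecting this with the constraint that a fixed positive fraction of these boxes are bad, a Peierls/contour count over $*$-connected surfaces of bad renormalized sites gives, for $K$ large enough, a probability bound $C_1 \exp(-C_2 n^{d-1})$ with $C_1, C_2$ depending only on $d$ and $p_0$. Choosing $\alpha = \alpha(d,p_0)$ appropriately closes the argument.

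The main obstacle I anticipate is the surgery/minimality step: making precise that minimality of $|E_{n,p}|$ (among minimal-capacity cutsets) forces a bounded number of edges per good box, and in particular handling the anchoring of the cutsets at the boundary $\partial(nS(\vv))$, which is slightly different from the maximal-flow setting in \cite{Zhang2017} and must be checked not to create uncontrolled boundary contributions. A secondary technical point is verifying that the enlargement of the cylinder needed so that rerouted cutsets stay admissible costs only a multiplicative constant in $n^{d-1}$, which it does. Everything else is a careful bookkeeping of Zhang's proof with the monotone coupling $p_0 \leq p$ ensuring all renormalization constants are frozen at the value dictated by $p_0$.
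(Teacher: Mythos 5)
Your general strategy is the right one (renormalization, uniformity in $p$ via monotonicity, Peierls counting over renormalized surfaces) and you correctly identify the source of uniformity: the events defining ``good'' boxes are monotone in $p$, so their failure probabilities are maximized at $p=p_0$ over $[p_0,1]$. This matches the role of Lemma~\ref{Grim} in the paper, which establishes a decay of $p$-atypical box events that is uniform in $p\geq p_0$.

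However, the core geometric step you propose --- that minimality of $|E_{n,p}|$ (among minimal-capacity cutsets) forces each good $K$-box to meet $E_{n,p}$ in only $O(K^{d-1})$ edges, via a local rerouting surgery --- is a genuine gap, and it is not what Zhang or the paper does. The restriction of a global cutset to a mesoscopic box $Q$ need not be a local cut in $Q$: it can weave in and out and cut nothing locally, so there is no candidate short-circuit and no obvious surgery that reduces cardinality without increasing capacity or destroying the cutset property globally. Moreover, $E_{n,p}$ must cut \emph{all} lattice paths (open and closed) between $C'_1(nS(\vv),n)$ and $C'_2(nS(\vv),n)$, so its size is not governed by the open crossing cluster inside $Q$ alone; a good box gives you nothing directly about the closed edges of $E_{n,p}\cap Q$. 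The paper avoids any claim about the local structure of $E_{n,p}$ by a configuration-swap trick: pass to the configuration $\sigma(\omega)$ in which the $J(\omega)=\tau_p(n,\vv)$ open edges $e_1,\dots,e_{J}$ of $E_{n,p}$ are closed, so that the exterior boundary $\Delta_e\sC(n)$ of the cluster of $C'_1(nS(\vv),n)$ is a \emph{closed} cutset; run Zhang's renormalization on this closed boundary to extract a $*$-connected family $\Gamma_t$ of $t$-cubes whose fattening $\bar\Gamma_t$ contains a closed cutset $\Gamma$ with $|\Gamma|\leq C|\Gamma_t|t^{d-1}$, and such that each interior cube of $\Gamma_t$ has a $*$-neighbour where a $p$-atypical event occurs; then swap back to $\omega$, observe that $\Gamma(\omega)$ is still a cutset with $|\Gamma(\omega)|_{o,p}=J(\omega)$, hence has minimal capacity, hence $\cN_{n,p}\leq|\Gamma(\omega)|\leq C|\Gamma_t|t^{d-1}$. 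It is this indirection --- constructing a competitor cutset of the same capacity and controlled size, rather than analysing $E_{n,p}$ itself inside good boxes --- that makes the Peierls count go through. Without it, your argument does not close.

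A secondary point: you claim that the boxes meeting $E_{n,p}$ contain a $*$-connected renormalized surface, but Zhang's connectivity lemma applies to the boxes meeting the closed boundary $\Delta_e\sC(n)$ in the swapped configuration, not to the boxes meeting $E_{n,p}$ directly; the latter set need not have the required combinatorial structure. The boundary/anchoring issue you flag at the end is real but is the smaller of the two problems, and the paper handles it by noting that boxes whose fattening touches $\partial B(n,\vv)$ number at most $C_{d,t}n^{d-1}$ and can be discarded from the count.
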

\begin{rk}
The proof is going to be simpler than the proof of Theorem 2 in \cite{Zhang2017}, because passage times in our context can take only values $0$ or $1$, \textit{i.e.}, to each edge we associate an i.i.d random variable of distribution $G_p=p\delta_1+(1-p)\delta_0$ whereas Zhang considers in \cite{Zhang2017} more general distributions. Our setting is equivalent to bond percolation of parameter $p$ by saying that an edge is closed if its passage time is $0$, and open if its passage time is $1$.
\end{rk}

Let us briefly explain the idea behind that theorem. Let $p\geq p_0$. We work on bond percolation of parameter $p$ (equivalently on first passage percolation with distribution $G_p=p\delta_1+(1-p)\delta_0$). We aim at bounding the size of the smallest minimal cutset that cuts the set $C'_1(nS(\vv),n)$ from $C'_2(nS(\vv),n)$ in $B(n,\vv)$. To do so we do a renormalization at a scale $t$ in order to build a "smooth" minimal cutset. The collection $(B_t(u))_{u\in\sZ^d}$ is a partition of $\sZ^d$ into boxes of size $t$ and $\bar{B}_t(u)=\bigcup _{v \overset{*}{\sim} u}B_t(u)$ where $v\overset{*}{\sim} u$ if $\|u-v\|_\infty=1$.
We will need the following Lemma that controls the probability that a $p$-atypical event occurs in a cube. We will prove this lemma after proving Theorem \ref{Zhang}
\begin{lem}[Uniform decay of the probability an atypical event occurs]\label{Grim}
Let $p_0>p_c(d)$. There exist positive constants $C_1(p_0)$ and $C_2(p_0)$ depending only on $p_0$ and $d$ such that for all $p\geq p_0$, for all $u\in\sZ^d$, for all $t\geq 1$, 
\begin{align}\label{decay}
\Prb\left[\text{a $p$-atypical event occurs in $B_t(u)$}\right]\leq C_1(p_0)\exp(-C_2(p_0)t)\,.
\end{align}
\end{lem}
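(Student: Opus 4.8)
The plan is to deduce the lemma from classical exponential--decay estimates for supercritical bond percolation — the Grimmett--Marstrand slab theorem and the Pisztora / Antal--Pisztora coarse--graining — and to extract the uniformity in $p$ from monotonicity. I work throughout under the standard coupling: $(U_e)_{e\in\E^d}$ are i.i.d.\ uniform on $[0,1]$ and $e$ is declared $p$-open when $U_e\le p$, so that the $p$-open configuration is non-decreasing in $p$ and the probability $\Prb$ in the statement is this joint law. Recall that the $p$-atypical event in $B_t(u)$ is (by the way it is set up for the renormalization behind Theorem~\ref{Zhang}) the complement of a favourable event that is an intersection of a bounded number — independent of $t$ — of geometric properties of the $p$-open configuration in $\bar B_t(u)$: there is a $p$-open cluster crossing $\bar B_t(u)$ between opposite faces, this crossing cluster is unique, and it is dense enough in $B_t(u)$ that a minimal cutset can be locally rerouted along $\partial B_t(u)$ at a cost $O(t^{d-1})$. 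If, as one expects from such constructions, the favourable event is chosen to be \emph{increasing} (so the atypical event is decreasing), then the coupling immediately yields $\Prb[B_t(u)\ p\text{-atypical}]\le\Prb[B_t(u)\ p_0\text{-atypical}]$ for every $p\ge p_0$, and everything reduces to the single supercritical parameter $p_0$.

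For that single-parameter estimate I would invoke the classical bounds (see e.g.\ \cite{Grimmett99}): Pisztora's renormalization estimate for the existence and uniqueness of the crossing cluster, and the Antal--Pisztora density / chemical--distance bound, all of which give an exponential decay in $t$ with constants depending only on $d$ and $p_0$. To make the argument self-contained and to cover the case where some constituent of the atypical event is not monotone, I would also note that these classical inputs themselves are uniform on $[p_0,1]$: they rest on a finite--size criterion (an open crossing of a large fixed box, or a crossing of a slab of width $k$, occurring with probability exceeding a fixed threshold), which is an \emph{increasing} event, hence holds for every $p\in[p_0,1]$ as soon as it holds at $p_0$; and it holds at $p_0$ because $p_c(\slab(k))\downarrow p_c(d)$ lets one fix a width $k=k(p_0)$ with $p_c(\slab(k))<p_0$. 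The renormalization that turns this criterion into exponential decay then has constants depending only on $d$ and, through $k$, on $p_0$.

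The uniqueness of the crossing cluster is the main obstacle, since ``two disjoint $p$-open crossing clusters of $\bar B_t(u)$ that are not joined inside $\bar B_t(u)$'' is not a monotone event. I would dominate it by a decreasing event: on that event there is a cutset consisting only of \emph{closed} edges separating the two clusters within $\bar B_t(u)$, hence a $*$-connected collection of closed dual plaquettes of diameter at least $ct$; the existence of such a closed surface is a decreasing event, so again its probability at $p$ is at most its probability at $p_0$. One then estimates the latter by a union bound carried out at the scale of blocks of a large side--length $L=L(p_0)$ — chosen via Grimmett--Marstrand so that a block fails to be good with probability at most $\epsilon$, with $\epsilon$ as small as desired — using that the number of $*$-connected block--surfaces of $N$ blocks through a fixed block is at most $c_d^{\,N}$ for a purely combinatorial $c_d$; taking $L$ with $c_d\epsilon<1$ makes the sum over $N\ge ct/L$ geometric and produces a bound $c'\,t^{d-1}\exp(-c''t)$, while for $d=2$ the surface is merely a closed dual path and this is the classical estimate. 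Summing the bounds over the finitely many constituents of the atypical event yields $\Prb[B_t(u)\ p\text{-atypical}]\le C_1(p_0)\exp(-C_2(p_0)t)$ with $C_1(p_0),C_2(p_0)$ depending only on $d$ and $p_0$, as claimed.
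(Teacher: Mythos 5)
Your proposal shares the paper's central idea — exploit monotonicity in $p$ so that the renormalization parameters can be fixed at $p_0$ and reused for all $p\in[p_0,1]$ — but the route you take through the non-monotone part is genuinely different from the paper's.

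The paper first shows that a $p$-atypical event (defined there precisely as a ``$p$-blocked'' or ``$p$-disjoint'' property of $B_t(u)$) forces one of three things: $\bar B_t(u)$ has no $p$-crossing cluster, some sub-$t$-cube has no $p$-crossing cluster, or the event $T_{t,3t}(p)$ occurs (a crossing cluster coexists with a disjoint open cluster of diameter $\ge t$). The first two are decreasing in $p$, hence handled exactly as you do. For $T_{m,N}(p)$ the paper does \emph{not} run a fresh Peierls argument: it invokes Lemma~7.104 of Grimmett (for $d\ge 3$) and Theorem~9 of Couronn\'e--Messikh (for $d=2$), and the only new observation is that the intermediate estimate these proofs rest on (Lemma~7.78 in Grimmett, resp.\ Proposition~6 in Couronn\'e--Messikh) is \emph{monotone} in $p$, so the parameters $\delta,L$ (resp.\ $c$) fixed at $p_0$ remain valid on $[p_0,1]$. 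This is cleaner and shorter than re-deriving the estimate.

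Your alternative — dominate ``two disjoint large clusters'' by a decreasing event of the form ``there is a large $*$-connected closed dual surface'' and then do a coarse-grained Peierls count at a $p_0$-dependent block scale supplied by Grimmett--Marstrand — is in the right spirit and would likely yield the same conclusion, but as written it has gaps that prevent it from standing on its own. First, your description of the atypical event does not match the paper's: the paper's ``$p$-blocked'' property (a cluster joining $B_t(u)$ to $\partial\bar B_t(u)$ that \emph{misses} some $t$-cube of $\bar B_t(u)$) is not the same as your ``dense enough to reroute a cutset'' condition, and you never actually bound the blocked property; the paper needs the sub-box crossing events precisely to reduce this case too to $T_{t,3t}$. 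Second, the domination step is asserted without proof: you claim the closed separating surface is $*$-connected and of diameter $\ge ct$. The $*$-connectivity of the exterior boundary of an open cluster intersected with a box is a standard but non-trivial topological fact, and the diameter lower bound requires an argument (one must check that a surface separating two clusters of diameter $\ge t$ inside $\bar B_t(u)$ cannot itself be small). Third, the Peierls count ``at most $c_d^N$ $*$-connected block-surfaces of $N$ blocks through a fixed block'' needs a starting block; you must show that one can always locate such a block, which is the analogue of the step you skip. None of these gaps is fatal — they are exactly what Grimmett's Lemma~7.104 and Couronn\'e--Messikh package up — but to be a complete proof they would need to be filled in, whereas the paper sidesteps them by citing those results and only adding the monotonicity remark.
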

We would like to highlight the fact that in Lemmas 6 and 7 in \cite{Zhang2017}, Zhang proves the same result but with constants $C_1$ and $C_2$ depending on $p$. Obtaining a decay that is uniform for $p\in[p_0,1]$ is the key element to adapt this proof and show that the constant $\alpha$ in the statement of the Theorem \ref{Zhang} does depend only on $p_0$ and $d$.

As the original proof is very technical, the adaptation of the proof is also technical.
\begin{proof}[Adaptation of the proof of Theorem $1$ in \cite{Zhang2017} to get Theorem \ref{Zhang} using Lemma \ref{Grim}]
We keep the same notations as in \cite{Zhang2017}. The following adaptation is not self-contained. Let $p_0>p_c(d)$ and $\vv\in\sS^{d-1}$.
In \cite{Zhang2017}, the author bounds the size of the smallest minimal cutset that cuts a given set of vertices $V$ from infinity. However, his construction of a linear cutset in section $2$ of \cite{Zhang2017} is not specific to the set $B(k,m)$ and can be defined in the same way for any set of vertices. In particular we can replace $B(k,m)$ by $C'_1(nS(\vv),n)$ and $\infty$ by $C'_2(nS(\vv),n)$ (as it is done by Zhang in Theorem 2 in \cite{Zhang2017}). We denote by $\sC(n)$ the set that corresponds to $C(k,m)$ defined in Lemma 1 in \cite{Zhang2017}:
$$\sC(n)=\{v\in\sZ^d:\text{ $v$ is connected to $C'_1(nS(\vv),n)$ by an open path }\}\,.$$
We denote by $\cG(n)$ the event that $\sC(n)\cap C'_2(nS(\vv),n)=\emptyset$ (it corresponds to $\cG(k,m)$ in \cite{Zhang2017}). On this event, the exterior edge boundary $\Delta_e \sC(n)$ of $\sC(n)$ is a closed cutset that cuts $C'_1(nS(\vv),n)$ from $C'_2(nS(\vv),n)$.  We denote by $\underline{A}$ the set of $t$-cubes that intersect $\Delta_e \sC(n)$. By Zhang construction, we can extract from $\underline{A}$ a set of cubes $\Gamma_t$ such that $\Gamma_t$ is $*$-connected and the union $\bar{\Gamma}_t$ of the $3t$-cubes in $\Gamma_t$ (the cubes in $\Gamma_t$ and their $*$-neighbors) contains a cutset of null capacity that cuts the set $C'_1(nS(\vv),n)$ from $C'_2(nS(\vv),n)$. Moreover, each cube in $\Gamma_t$ has a $*$-neighbor where a $p$-atypical event occurs.

 As we only focus on edges inside $B(n,\vv)$, we can assume that all other edges are closed. Thus, the set $\Delta_e \sC(n)\setminus B(n,\vv)$ is included in the exterior edge boundary $\Delta_e B(n,\vv)$ of $B(n,\vv)$. Therefore, the cubes $B_t(u)$ in $\underline{A}$ such that $\bar{B}_t(u)$ is not contained in the strict interior of $B(n,\vv)$ satisfy $\bar{B}_t(u)\cap\Delta_eB(n,\vv)\neq\emptyset$. We deduce that there are at most $C_{d,t} n ^{d-1}$ such cubes in $\underline{A}$ (and so, in $\Gamma_t$) where $C_{d,t}$ is a constant depending only on the dimension $d$ and $t$. Moreover, any cube $B_t(u)$ that intersects the boundary $\Delta_eC'_1(nS(\vv),n)\setminus B(n,\vv)$ belongs to $\underline{A}$ as it also intersects $\Delta_e\sC(n)$ and by Zhang construction, we can prove that the cube $B_t(u)$ also belongs to $\Gamma_t$. Thanks to this remark, we avoid the part of Zhang's proof where he tries to find a vertex $z$ in the intersection between the cutset $W(k,m)$ and a line $L$ in order to find a cube that is in $\Gamma_t$. Thus, the term $\exp(\beta^{-1}n)$ in (6.19) is not necessary.

The set $E=\{\langle x,y\rangle\in B(n,\vv)\,:\,x\in C'_1(nS(\vv),n)\,\}$ cuts the set $C'_1(nS(\vv),n)$ from the set $C'_2(nS(\vv),n)$ in $B(n,\vv)$ and there exists a constant $c_d$ depending only on $d$ but not on $\vv$ such that $|E|\leq c_d n^{d-1}$. Thus, we obtain that $$\tau_p(n,\vv)\leq |E| \leq c_d n^{d-1} \,.$$ We denote by $E_{n,p}$ the cutset that achieves the infimum in $\tau_p(n,\vv)$ and such that $|E_{n,p}|=\cN_{n,p}$ ($E_{n,p}$ corresponds to $W(k,m)$ in \cite{Zhang2017}). For a configuration $\omega$, we denote by $e_1,\dots,e_{J(\omega)}$ the $p$-open edges in $E_{n,p}$. We have $J(\omega)=\tau_p(n,\vv)(\omega) \leq c_d n^{d-1} $. We denote by $\sigma(\omega)$ the configuration which coincides with $\omega$ except in edges $e_1,\dots,e_{J(\omega)}$ that are closed for $\sigma(\omega)$. Thus, the set $E_{n,p}(\sigma(\omega))$ is a $p$-closed (for the configuration $\sigma(\omega)$) cutset that cuts $C'_1(nS(\vv),n)$ from $C'_2(nS(\vv),n)$ in $B(n,\vv)$. Note that the set of edges $E_{n,p}(\sigma(\omega))$ is determined by the configuration $\omega$ whereas we consider its capacity for $\sigma(\omega)$. We recall that all the edges outside $B(n,\vv)$ are closed so that the event $\cG(n)$ occurs in the configuration $\sigma(\omega)$ and we can use the construction of section 2 in \cite{Zhang2017}: $\bar{\Gamma}_t$ contains a $p$-closed (for $\sigma(\omega)$) cutset $\textbf{$\Gamma$}$ that cuts $C'_1(nS(\vv),n)$ from $C'_2(nS(\vv),n)$ (see Lemma 4 in \cite{Zhang2017}). By taking the intersection of this cutset with the box $B(n,\vv)$, we obtain the existence of a closed cutset that cuts  $C'_1(nS(\vv),n)$ from $C'_2(nS(\vv),n)$ in $B(n,\vv)$. 

We now change $\sigma(\omega)$ back to $\omega$. For $i\in\{1,\dots,J(\omega)\}$, the passage time of $e_i$ changes from $0$ to $1$. We write $\Gamma(\omega)$ when we consider the edge set $\Gamma$ with its edges capacities determined by the configuration $\omega$. The set $\Gamma(\omega)$ exists as an edge set, it is still a cutset but it is no longer closed, all edges in $\Gamma(\omega)$ except the $e_i$ are closed. 
Therefore, $|\Gamma(\omega)|_{o,p}\leq J(\omega)$, but by definition of $E_{n,p}$, we have  $J(\omega)=|E_{n,p}(\omega)|_{o,p}\leq |\Gamma(\omega)|_{o,p}\leq J(\omega)$ and so $|\Gamma(\omega)|_{o,p}=J(\omega)$.
Moreover, for each $\omega$, by definition of $\cN_{n,p}(\omega)$, we get that $|\Gamma(\omega)|\geq \cN_{n,p}(\omega)$. 

Note that for the $t$-cubes $B_t(u)\in\Gamma_t$ such that $\bar{B}_t(u)$ intersects the boundary of $B(n,\vv)$, we cannot be sure that there exists a $t$-cube in $\bar{B}_t(u)$ where a $p$-atypical event occurs, but the number of such cubes is at most $C_{d,t}n^{d-1}$. Thus, if the number of $t$-cubes in $\Gamma_t$ is greater than $\beta n^{d-1}$, then the number of $t$-cubes in $\Gamma_t$ that do not intersect the boundary of $B(n,\vv)$ and that do not contain any edge among $e_1,\dots,e_J$ is greater than $(\beta- C_{d,t}-c_d)n^{d-1}$. All these $t$-cubes have at least one $*$-neighbor with a blocked or disjoint property. This leads to small modifications of constants in the proof of \cite{Zhang2017}. We insist on the fact that the remainder of the proof is the same except that we use Lemma \ref{Grim}, \textit{i.e.}, a uniform decay for $p\in[p_0,1]$ of the probability of a $p$-atypical event instead of using the control in \cite{Zhang2017}.
\end{proof}
Let us now prove Lemma \ref{Grim}. We need to adapt some existing proofs in order to obtain a decay which is uniform in $p$. Let us first introduce some useful definitions.

\begin{figure}[ht]
\def\svgwidth{0.8\textwidth}
\begin{center}
 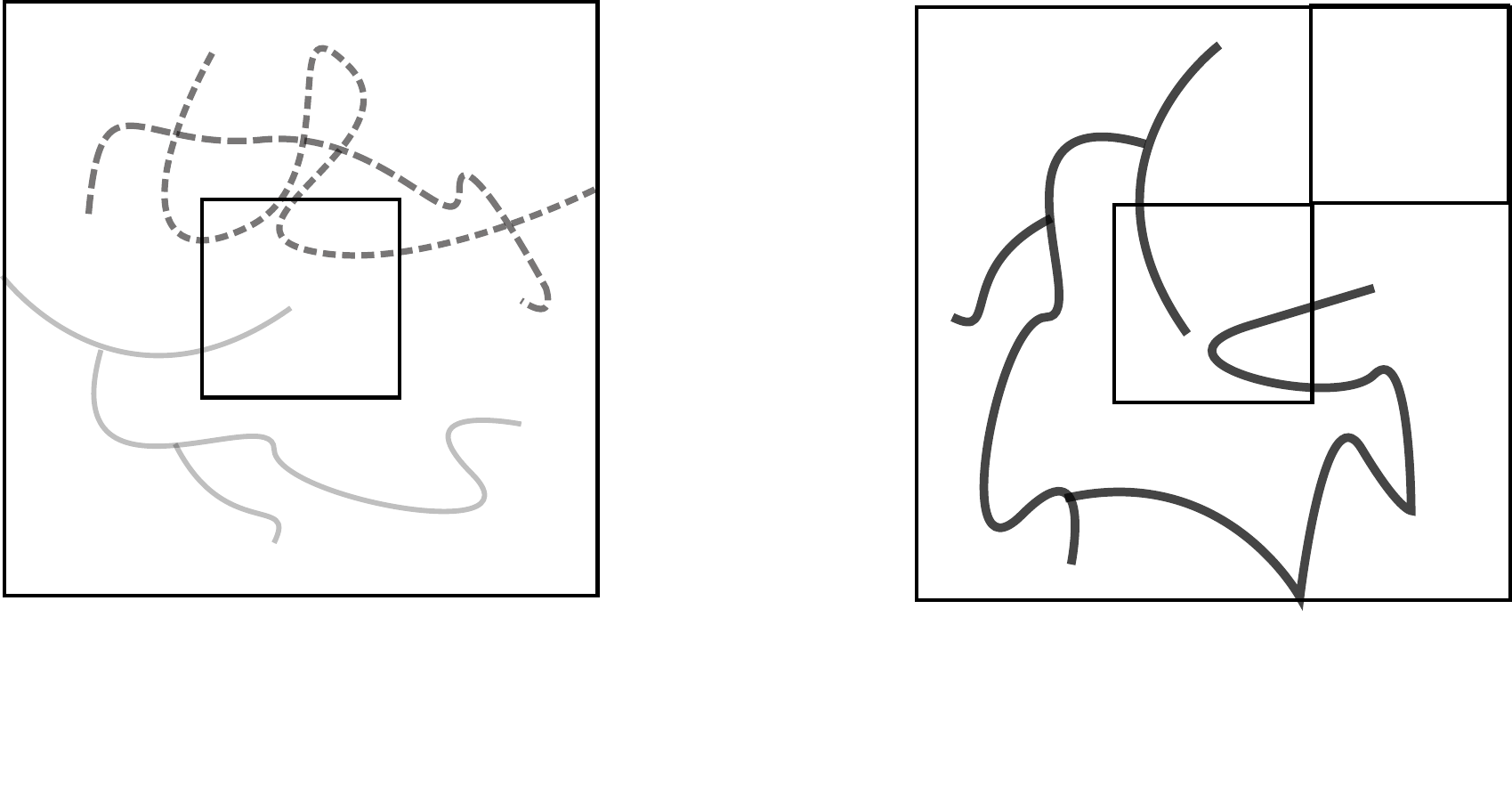
 \vspace{-1cm}
 \caption{On the left a box with a disjoint property, on the right a box with a blocked property}
 \label{atypical}
 \end{center}
\end{figure}
 
A connected cluster $C$ is said to be $p$-crossing for a box $B$, if for all $d$ directions, there is a $p$-open path in $C\cap B$ connecting the two opposite faces of $B$.
We define the diameter of a finite cluster $\sC$ as 
$$\Diam(\sC):=\max_{\substack{i=1,\dots, d\\ x,y\in \sC}}|x_i-y_i|\, $$
where $|.|$ represents the standard absolute value.
Let $T_{m,t}(p)$ be the event that $B_t$ has a $p$-crossing cluster and contains some other $p$-open cluster $D$ having diameter at least $m$. 
We say that $B_t(u)$ has a $p$-disjoint property if there exist two disconnected $p$-open clusters in $\bar{B}_t(u)$, both with vertices in $B_t(u)$ and in the boundary of $\bar{B}_t(u)$. We say that $B_t(u)$ has a $p$-blocked property if there is a $p$-open cluster $C$ in $\bar{B}_t(u)$ with vertices in $B_t(u)$ and in the boundary of $\bar{B}_t(u)$, but without vertices in a $t$-cube of $\bar{B}_t(u)$. We say that a $p$-atypical event occurs in $B_t(u)$ if it has a $p$-blocked property or a $p$-disjoint property (see Figure \ref{atypical}).
\begin{proof}[Proof of Lemma \ref{Grim}]
First, note that if  $B_t(u)$ has a $p$-disjoint property and $\bar{B}_t(u)$ has a $p$-crossing cluster, then one of the two disjoint cluster is different from the $p$-crossing cluster. Therefore, there is a $p$-open cluster of diameter greater than $t$ different from the $p$-crossing cluster, so the event $T_{t,3t}(p)$ occurs in the box $\bar{B}_t(u)$. Similarly, let us assume that $B_t(u)$ has a $p$-blocked property and $\bar{B}_t(u)$ and all of its sub-boxes (\textit{i.e}, boxes $B_t(v)$ such that $B_t(v)\subset\bar{B}_t(u)$) have a $p$-crossing cluster. We denote by $C$ the $p$-open cluster in the definition of the $p$-blocked property. Thus, there is at least one cluster among $C$ and the $p$-crossing clusters of the sub-boxes that are disjoint from the $p$-crossing cluster of $\bar{B}_t(u)$ and so the event $T_{t,3t}(p)$ occurs in the box $\bar{B}_t(u)$. Thus,
\begin{align}\label{eqlem1}
\Prb&[\text{a $p$-atypical event occurs in $B_t(u)$ }]\leq \Prb[\bar{B}_t(u)\text{ does not have a $p$-crossing cluster}]\nonumber\\
&\hspace{2.7cm}+3^d\Prb[B_t(u)\text{ does not have a $p$-crossing cluster}]+ \Prb\left[T_{t,3t}(p)\right]\
\end{align}
As the event $\{B_t(u)\text{ doesn't have a $p$-crossing cluster}\}$ is non-increasing in $p$, we have
$$\Prb[B_t(u)\text{ doesn't have a $p$-crossing cluster}]\leq \Prb[B_t(u)\text{ doesn't have a $p_0$-crossing cluster}]\,.$$
The probability for a box $B_t(u)$ not to have a $p_0$-crossing cluster is decaying exponentially fast with $t^{d-1}$, see for instance Theorem 7.68 in \cite{Grimmett99}. Therefore, there exist positive constants $c_1(p_0)$ and $c_2(p_0)$ such that 
\begin{align}\label{eqlem2}
\Prb[B_t(u)\text{ does not have a $p$-crossing cluster}]&\leq c_1(p_0)\exp(-c_2(p_0)t^{d-1})\,.
\end{align}
It remains to prove that there exist positive constants $\kappa(p_0)$ and $\mu(p_0)$ depending only on $p_0$ such that for all $p\geq p_0$, for all positive integers $m$ and $N$
\begin{align}\label{eqlem3}
\Prb[T_{m,N}(p)]\leq\kappa N^{2d}\exp(-\mu m)\,.
\end{align} 
In dimension $d\geq 3$, we refer to the proof of Lemma 7.104 in \cite{Grimmett99}. The proof of Lemma 7.104 requires the proof of Lemma 7.78. The probability controlled in Lemma 7.78 is clearly non decreasing in the parameter $p$. Thus, if we choose $\delta(p_0)$ and $L(p_0)$ as in the proof of Lemma 7.78 for $p_0>p_c(d)$, then these parameters can be kept unchanged for some $p\geq p_0$. Thanks to Lemma 7.104, we obtain 
\begin{align*}
\forall p \geq p_0,\, \Prb(T_{m,N}(p))&\leq d(2N+1)^{2d} \exp\left(\left(\frac{m}{L(p_0)+1}-1\right)\log(1-\delta(p_0))\right)\\
&\leq \frac{d.3^d}{1-\delta(p_0)}N^{2d}\exp\left(-\frac{-\log(1-\delta(p_0))}{L(p_0)+1} m\right)\, .
\end{align*}
We get the result with $$\kappa= \frac{d.3^d}{1-\delta(p_0)}\quad\text{and}\quad\mu=\frac{-\log(1-\delta(p_0))}{L(p_0)+1}>0\,.$$

\noindent In dimension 2, the result is obtained by Couronn{\'e} and Messikh in the more general setting of FK-percolation, see Theorem 9 in \cite{COURONNE200481}. We proceed similarly as in dimension $d\geq3$, the constant appearing in this theorem first appeared in Proposition 6. The probability of the event considered in this proposition is clearly increasing in the parameter of the underlying percolation which have parameter $1-p$, it is an event for the subcritical regime of the Bernoulli percolation. Let us fix a $p_0>p_c(2)=1/2$, then $1-p_0<p_c(2)$ and we can choose the parameter $c(1-p_0)$ and keep it unchanged for some $1-p\leq 1-p_0$. In Theorem 9, we get the expected result with $c(1-p_0)$ for a $p\geq p_0$ and $g(n)=n$. 

Finally, combining inequalities \eqref{eqlem1}, \eqref{eqlem2} and \eqref{eqlem3}, we get
\begin{align*}
\Prb&[\text{a $p$-atypical event occurs in $B_t(u)$}]\\
&\leq c_1(p_0)\exp(-c_2(p_0)(3t)^{d-1})+3^dc_1(p_0)\exp(-c_2(p_0)t^{d-1})+\kappa(p_0)(3t)^{2d}\exp(-\mu(p_0) t)\,.
\end{align*}
The result follows.
\end{proof}
\noindent We have now the key ingredients to prove that the map $p\mapsto \beta_p$ is Lipschitz continuous.
\begin{proof}[Proof of Theorem \ref{heartflow}] Let $p_c< p_0<p_1<1$,$\vv\in\sS^{d-1}$, and $p,q$ such that $p_0\leq p<q\leq p_1$. First, we fix a cube $B(n,\vv)$ and we couple the percolations of parameters $p$ and $q$ in the standard way, \textit{i.e.}, we consider the i.i.d. family $(U(e))_{e\in \E ^d}$ distributed according to the uniform law on $[0,1]$ and we say that an edge $e$ is $p$-open (resp. $q$-open) if $U(e)\geq p$ (resp. $U(e)\geq q $). Thanks to this coupling, we easily obtain that $\tau_p(\vv,n)\leq \tau_q(\vv,n)$ and by dividing by $(2n)^{d-1}$, taking the expectation and letting $n$ go to infinity we conclude that
 
\begin{align}\label{ineqn1}
\beta_p(\vv)\leq \beta_q(\vv)\,.
\end{align}
Let $E_{n,p}$ be a random cutset of minimal size that achieves the minimum in the definition of $\tau_p(n,\vv)$. We consider now another coupling. The idea is to introduce a coupling of the percolations of parameter $p$ and $q$ such that if an edge is $p$-open then it is $q$-open and $E_{n,p}$ is independent of the $q$-state of any edge. Unfortunately, we cannot find such a coupling but we can introduce a coupling that almost has this property. To do so, for each edge we consider two independent Bernoulli random variables $U$ and $V$ of parameters $p$ and $(q-p)/(1-p)$. We say that an edge $e$ is $p$-open if $U(e)=1$ and that it is $q$-open if $U(e)=1$ or $V(e)=1$. Indeed,
$$\Prb[\{U=1\}\cup\{V=1\}]=p+(1-p) \frac{q-p}{1-p}=q\, .$$
 Let $\delta>0$.  
We have,
\begin{align}\label{ineq1}
\Prb&\left[\tau_q(n,\vv)>\tau_p(n,\vv)+\left(\dfrac{q-p}{1-p}+\delta\right)\alpha n^{d-1},\,\cN_{n,p}<\alpha n^{d-1} \right]\nonumber\\
&\hspace{0.8cm}\leq \Prb\left[\tau_q(n,\vv)-\tau_p(n,\vv)>\left(\dfrac{q-p}{1-p}+\delta\right)|E_{n,p}|\right]\nonumber\\
&\hspace{0.8cm}\leq \sum_{\cE}\Prb\left[E_{n,p}=\cE,\, \#\{e\in\cE:(U(e),V(e))=(0,1)\}>\left(\dfrac{q-p}{1-p}+\delta\right)|\cE|\right]\nonumber\\
&\hspace{0.8cm}\leq \sum_{\cE}\Prb[E_{n,p}=\cE]\,\Prb\left[ \#\{e\in\cE:V(e)=1\}>\left(\dfrac{q-p}{1-p}+\delta\right)|\cE|\right]\nonumber\\
&\hspace{0.8cm}\leq  \exp(-2\delta^ 2n^{d-1})
\end{align}
where the sum is over sets $\cE$ that cut $C'_1(n S(\vv),n)$ from $C'_2(n S(\vv),n)$ in $B(n,\vv)$ and where we use in the last inequality Chernoff bound and the fact that $|E_{n,p}|\geq n^{d-1}$ (uniformly in $\vv$). Finally, using inequality \eqref{ineq1} and Theorem \ref{Zhang}, we get
\begin{align*}
\E[\tau_q(n,\vv)]&\leq \E[\tau_q(n,\vv)\ind_{\cN_{n,p}<\alpha n^{d-1} }]+\E[\tau_q(n,\vv)\ind_{\cN_{n,p}\geq\alpha n^{d-1} }]\\
&\leq \E[\tau_p(n,\vv)]+\left(\dfrac{q-p}{1-p}+\delta\right)\alpha n^{d-1}+ |B(n,\vv)|\left(\e^{-2\delta^ 2n^{d-1}}+C_1\e^{-C_2n^{d-1}}\right)\\
&\leq \E[\tau_p(n,\vv)]+\left(\frac{q-p}{1-p}+\delta\right)\alpha n^{d-1}+C_d(2n)^{d} \left(\e^{-2 \delta^2 n^{d-1}}+C_1\e^{-C_2 n^{d-1}}\right)\,,
\end{align*}
where $C_d$ is a constant depending only on $d$.
Dividing by $(2n)^{d-1}$ and by letting $n$ go to infinity, we obtain
\begin{align}
\beta_q(\vv)\leq \beta_p(\vv)+\left(\dfrac{q-p}{1-p}+\delta\right)\frac{\alpha}{2^{d-1}}
\end{align}
and by letting $\delta$ go to $0$,  
\begin{align}\label{inq2}
\beta_q(\vv)\leq \beta_p(\vv)+\kappa(q-p)
\end{align}
where $\kappa=\alpha/((1-p_1)2^{d-1})$.
Combining inequalities \eqref{ineqn1} and \eqref{inq2}, we obtain that 
\begin{align*}
\sup_{\vv \in \sS^{d-1}} |\beta_q(\vv)-\beta_p(\vv)|\leq \kappa |q-p|\,.
\end{align*}
\end{proof}
 
\section{Proof of Theorems \ref{Cheethmd} and \ref{Wulffthmd}}\label{s4}
\begin{proof}[Proof of Theorem \ref{Cheethmd}]
Let $p_c<p_0<p_1<1$ and $p,q\in[p_0,p_1]$. 
We recall that $W_p$ denotes the Wulff crystal for the norm $\beta_p$ such that $\cL^d(W_p)=1/\theta_p$. In this section we aim to prove that the map $p\mapsto \cI_p(W_p)$ is Lipschitz continuous on $[p_0,p_1]$.

  Notice that as the map $p\mapsto\theta_p$ is non-decreasing, for $p<q$ we have 
\begin{align}\label{volume} 
 \cL^d(W_p)\geq \cL^d(W_q)\,.
 \end{align}
 Moreover, the map $p\mapsto\theta_p$ is infinitely differentiable, see for instance Theorem 8.92 in \cite{Grimmett99}. Therefore, there exists a constant $L$ depending on $p_0$, $p_1$ and $d$ such that for all $p,q\in[p_0,p_1]$,
 \begin{align}\label{lip}
 |\theta_p-\theta_q|\leq L|q-p|\,.
 \end{align}
Let us compute now some useful inequalities.
For any set $E\subset\sR^d$ with Lipschitz boundary, by Theorem \ref{heartflow}, we have
\begin{align}\label{ii}
|\cI_p(E)-\cI_q(E)|&=\left|\int_{\partial E}\left(\beta_p(n_E(x))-\beta_q(n_E(x))\right)\cH^{d-1}(dx)\right|\nonumber\\
&\leq \int_{\partial E}\left|\beta_p(n_E(x))-\beta_q(n_E(x))\right|\cH^{d-1}(dx)\leq \kappa|q-p|\cH^{d-1}(\partial E)\,.
\end{align}
We recall that the map $p\rightarrow\beta_p$ is uniformly continuous on $[p_0,p_1]$. We denote by $\beta^{min}$ and $\beta^{max}$ its minimal and maximal value, i.e., for all $\vv\in\sS^{d-1}$ and $p\in[p_0,p_1]$, we have
$$\beta^{min}\leq \beta_p(\vv)\leq \beta^{max}\,.$$
Together with inequality \eqref{volume} and the fact that the Wulff crystal is a minimizer for an isoperimetric problem, we get
\begin{align}\label{i1}
\cI_p(W_p)\leq \cI_p(W_{p_0})= \int_{\partial W_{p_0}}\beta_p(n_{W_{p_0}}(x))\cH^{d-1}(dx)\leq \beta^{max}\cH^{d-1}(\partial W_{p_0})\,.
\end{align}
We also have
\begin{align*}
\cH^{d-1}(\partial W_p)=\int_{\partial W_p}\cH^{d-1}(dx)\leq  \int_{\partial W_p} \frac{\beta_p(n_{W_p}(x))}{\beta^{min}}\cH^{d-1}(dx)\leq \frac{\cI_p(W_p)}{\beta^{min}}\
\end{align*}
and so together with inequality \eqref{i1}, we get
\begin{align}\label{i2}
\cH^{d-1}(\partial W_p)\leq \cH^{d-1}(\partial W_{p_0})\frac{\beta^{max}}{\beta^{min}}\,.
\end{align}
Finally, we obtain combining inequalities \eqref{volume}, \eqref{ii} and \eqref{i2},
\begin{align}\label{f1}
\cI_p(W_p)\geq \cI_q(W_p)-\kappa|q-p|\cH^{d-1}(\partial W_p)\geq  \cI_q(W_q)-\kappa|q-p|\cH^{d-1}(\partial W_{p_0})\frac{\beta^{max}}{\beta^{min}}\,.
\end{align}
As $\cL^d(W_p)=\cL^d(W_q)\cdot \theta_q/\theta_p=\cL^d(W_q (\theta_q/\theta_p)^{1/d})$ and as $W_p$ is the minimizer for the isoperimetric problem associated with the norm $\beta_p$, we have
$$\cI_p(W_p)\leq \cI_p\left(\left(\frac{\theta_q}{\theta_p}\right) ^{1/d}W_q\right)\leq \left(\frac{\theta_q}{\theta_p}\right) ^{(d-1)/d}\cI_p(W_q)\leq\frac{\theta_q}{\theta_p}\cI_p(W_q)$$
and so using inequalities \eqref{lip}, \eqref{ii}, \eqref{i1} and \eqref{i2} 
\begin{align}\label{f2}
\cI_p(W_p)&\leq\frac{\theta_q}{\theta_p}\big(\cI_q( W_q)+\kappa|q-p|\cH^{d-1}(\partial W_q)\big)\nonumber\\
&\leq\left(1+\frac{L}{\theta_{p_0}}|q-p|\right)\left(\cI_q( W_q)+\kappa|q-p|\cH^{d-1}(\partial W_{p_0})\frac{\beta^{max}}{\beta^{min}}\right)\nonumber\\
&\leq \cI_q( W_q)+\beta^{max}\cH^{d-1}(\partial W_ {p_0})\left(\frac{L}{\theta_{p_0}}+\frac{\kappa}{\beta^{min}}\left(1+\frac{L}{\theta_{p_0}}\right)\right)|q-p|\,.
\end{align}
Thus combining inequalities \eqref{f1} and \eqref{f2} together with Theorem \ref{thmheart}, conditionally on the event $\{0\in\sC_{p_0}\}$, we get
\begin{align}
\lim_{n\rightarrow \infty }n|\widehat{\varphi}_n(q)-\widehat{\varphi}_n(p)|=|\cI_p(W_p)-\cI_q(W_q)|\leq \nu|q-p|
\end{align}
where we set $$\nu=\beta^{max}\cH^{d-1}(\partial W_ {p_0})\left(\frac{L}{\theta_{p_0}}+\frac{\kappa}{\beta^{min}}\left(1+\frac{L}{\theta_{p_0}}\right)\right)\,.$$
\end{proof}
 
\begin{proof}[Proof of Theorem \ref{Wulffthmd}]
Let $p_c<p_0<p_1<1$ and $p,q\in [p_0, p_1] $. We consider $\beta_p^*$ the dual norm of $\beta_p$, defined by 
$$\forall x\in \sR^d, \, \beta^*_p(x)=\sup\{x\cdot z\,: \, \beta_p(z)\leq 1\}\,.$$
Then $\beta^*_p$ is a norm. The Wulff crystal $\widehat{W}_{\beta_p}$ associated with $\beta_p$ is in fact the unit ball associated with $\beta^*_p$. Note that the supremum in the definition of $\beta_p^*$ is always achieved for a $z$ such that  $\beta_p(z)= 1$. Let $x\in\sS^{d-1}$. Let $y\in\sS^{d-1}$ be the direction that achieves the supremum for $\beta^*_p(x)$, thus we have 
\begin{align*}
\beta^*_p(x)=x\cdot \frac{y}{\beta_p(y)}
\end{align*}
and so using Theorem \ref{Cheethmd},
\begin{align*}
\beta^*_p(x)- \beta^*_q(x)\leq x\cdot\frac{y}{\beta_p(y)}-x\cdot\frac{y}{\beta_q(y)}&\leq \frac{\|x\|_2\|y\|_2}{\beta_p(y)\beta_q(y)}|\beta_p(y)-\beta_q(y)|\leq \frac{\kappa}{(\beta^{min})^2}|q-p|
\end{align*}
where $\beta^{min}$ was defined in the proof of Theorem \ref{Cheethmd}.
We proceed similarly for $\beta^*_q(x)-\beta^*_p(x)$. Finally, we obtain
\begin{align}
\sup_{x\in\sS^{d-1}}|\beta^*_p(x)- \beta^*_q(x)|\leq \frac{\kappa}{(\beta^{min})^2} |q-p| \,.
\end{align}
We recall the following definition of the Hausdorff distance between two subsets $E$ and $F$ of $\sR^d$:
$$d_\cH(E,F)=\inf \{r\in\sR^+:E\subset F^r\text{ and } F\subset E^r\}$$
where $E^r=\{y:\exists x\in E, \|y-x\|_2\leq r\}$. Thus, we have
$$d_\cH(\widehat{W}_{\beta^*_p},\widehat{W}_{\beta _q})\leq \sup_{y\in\sS^{d-1}}\left\|\frac{y}{\beta^*_p(y)}-\frac{y}{\beta^*_q(y)}\right\|_2\,.$$
Note that $y/\beta^*_p(y)$ (resp. $y/\beta^*_q(y)$) is in the unit sphere for the norm $\beta^*_p$ (resp. $\beta^*_q$). Let $x\in\sS^{d-1}$. Using the definition of $\beta^*$, we obtain
\begin{align*}
\frac{1}{\beta ^{max}}\leq x \cdot \frac{x}{\beta _p(x)}\leq \beta^*_p(x)\,.
\end{align*}
 
Finally, we have
\begin{align}\label{forme}
 d_\cH(\cB_{\beta^*_p},\cB_{\beta^*_q})&\leq \sup_{y\in\sS^{d-1}}\left|\frac{1}{\beta^*_p(y)}-\frac{1}{\beta^*_q(y)}\right|\nonumber\\
 &\leq \sup_{y\in\sS^{d-1}}\frac{1}{\beta^*_q(y)\beta^*_p(y)}\left|\beta^*_p(y)-\beta^*_q(y)\right|\nonumber\\
 &\leq \sup_{y\in\sS^{d-1}}(\beta^{max})^2\left|\beta^*_p(y)-\beta^*_q(y)\right|\leq  \frac{\kappa(\beta^{max})^2}{(\beta^{min})^2} |q-p| \,.
 \end{align}
The result follows.
\end{proof}
\bibliographystyle{plain}
\def\cprime{$'$}

\end{document}